\newtheorem{Remark}{Remark}
\newtheorem{Theorem}{Theorem}
\newtheorem{Example}{Example}
\newtheorem{Corollary}[Theorem]{Corollary}
\newtheorem{Lemma}[Theorem]{Lemma}
\journal{arXiv}
\begin{document}

\begin{frontmatter}

\title{Global Controllability for General Nonlinear Systems}
%
%
%
%

\author[mymainaddress]{Yuanyuan Liu}
\cortext[mycorrespondingauthor]{Corresponding author}
\ead{liuyuanyuan@mail.nwpu.edu.cn}

\author[mymainaddress]{Wei Zhang}
\ead{weizhangxian@nwpu.edu.cn}

\address[mymainaddress]{Northwestern Polytechnical University, Xi’an 710072, China.}

\begin{abstract}
This note studies the global controllability of a general nonlinear system by extending it to affine one. The state space of the obtained affine system admits a nature foliation, each leaf of which  is diffeomorphic to the state space of the original system. Through this foliation, the global controllability of these two kinds of systems are closely related and we prove that they are indeed equivalent. The result is then extended to the case with bounded inputs to make it practically more useful. To demonstrate the power of our approach, several examples are presented.
\end{abstract}

\begin{keyword}
Nonlinear systems\sep global controllability\sep foliation\sep Lie algebra.
\end{keyword}

\end{frontmatter}


\section{Introduction}\label{Sec1}
Controllability is one of the central issue in modern control theory. For linear systems, it has been fully studied and it turns out that the controllability of a linear system is completely characterized by the rank of its controllability matrix. As for nonlinear systems, things are very different. First, the reachability of linear systems is symmetric (which means that $x_0$ is reachable by $x_1$ if and only if $x_1$ is reachable by $x_0$) while in general it is not the case for nonlinear systems. Second, there is a difference between local and global controllability for nonlinear systems. 

For global controllability, existing researches mainly focus on a special class of nonlinear systems known as affine systems, that is, control systems of the following form
\begin{equation}\label{Eq1}
\dot{x}=f(x)+g(x)u,
\end{equation}
where $x$ is the state and $u$ is the control. (Rigorous definitions will be given in Section \ref{Sec2}.) Although, as far as we know, a complete characterization of global controllability for system \eqref{Eq1} is still lacking, many insightful results have been achieved. For example,  the controllability problem of switched linear systems has been completely solved in \cite{Sun2002Controllability} and the criterion is very similar to that of linear ones.  For codimension-$1$ affine systems, i.e., that with $n$ state variables and $n-1$ independent inputs, necessary and sufficient conditions are given (separately) in \cite{Hunt1982n-Dimensional}. However, for the $2$-dimensional case, a simple and easily verifiable condition which is both necessary and sufficient is obtained in \cite{Sun2007Necessary}. The author proved an assertion which is essential for the sufficiency of the condition, that is, any control curve separates the plane into two disjoint components. The result was then extended to $3$-dimensional case under some mild assumptions (see \cite{Sun2016On}). To our best knowledge, the closest result to a complete solution to this problem was achieved in \cite{Cheng2006Global} which, to some extent, covers all the aforementioned results.

In the study of controllability of nonlinear systems, Chow's theorem (see \cite{Chow1940Uber}) plays a key role.  It gives a sufficient condition for distributions to be globally controllable, but the condition needs not to be necessary. Indeed, counterexamples are given in \cite{Sussmann1973Orbits} along with a condition which is both sufficient and necessary. The ideal is to extend the given distribution to a new and ``bigger'' one. This ideal is essential. However, the controllability of system \eqref{Eq1} is different from that of a distribution. Since the reachability of the latter is symmetric while generally it is not the case for the former. To overcome this difficulty, the concept of foliation was introduced (see \cite{Emel'yanov1988Analysis,Emel'yanov1992Application}). It decomposes the controllability of a nonlinear system into two directions, that is, directions along and traverse the leaves of a foliation. This concept turns out to be powerful in analysis of nonlinear systems, see \cite{Rampazzo2012Control,Chaib2007Invertibility,Chaib2009Failure,Sundarapandian2002Global} for some applications of it. For a comprehensive discussion in the global controllability of nonlinear systems, we refer to the books \cite{Cheng2010Analysis,Coron2007Control,Sontag1998Mathematical}.

This note concerns the global controllability of a general nonlinear system, i.e. control system of the following form,
\begin{equation}\label{Eq2}
\dot{x}=f(x,u),
\end{equation}
where $x$ is the state and $u$ is the control. (Rigorous definitions will be given in Section \ref{Sec2}.) We are going to prove the equivalence between the global controllability of system \eqref{Eq2} and a specific affine system using some of the aforementioned techniques. 

The organization of the note is as follows. In Section \ref{Sec2}, we review some basic preliminaries and fix some notations for statement ease. Our main result along with some illustrative examples are presented in Section \ref{Sec3}. The proof of the main result is given in Section \ref{Sec4}. Finally, conclusions are given in Section \ref{Sec5}. 

\section{Preliminaries and Notations}\label{Sec2} 

By a general nonlinear control system, we mean system of the form \eqref{Eq2}, where the state $x$ belongs to a paracompact manifold $M$, called the state 
space of the system. Throughout the note the word ``manifold '' means smooth, finite-dimensional, second countable, connected and Hausdorff. We use $n$ to denote the dimension of $M$. The control $u$ is a function valued in $\mathcal{C}$, called the control space and, each component $u_i$ of $u$ belongs to $\mathcal{U}$, called the set of feasible controls. In the note, $\mathcal{C}$ is always assumed to be the Euclidean space $\mathbb{R}^m$ (except in Corollary \ref{Coro2} and its proof) and $\mathcal{U}$ the set of piecewise constant functions. (In the literature, $\mathcal{U}$ is usually assumed to be the set of measurable or piecewise continuous functions, etc. But the choice of $\mathcal{U}$ doesn't affect the controllability much, see \cite{Sussmann1979Subanalytic,Sussmann1987Reachability}.) Moreover, $f:M\times\mathbb{R}^m\rightarrow TM$ is a smooth mapping which assigns, to each $x\in M$ and $u\in\mathbb{R}^m$ a tangent vector $f(x,u)$ in $T_xM$. Here, $TM$ denotes the tangent bundle of $M$ and $T_xM$ the tangent space of $M$ at $x$.

Affine control system is a special nonlinear system which is of the form \eqref{Eq1}, where $x$ and $u$ are as previously. In addition, $f(\cdot)$ and $g_1(\cdot),\cdots,g_m(\cdot)$ are smooth vector fields on $M$. Here, $g_i,i=1,\cdots,m$ denotes the $i$-th component of $g$.

An absolutely continuous curve $x:[0,T]\rightarrow M$ is said to be a trajectory of system \eqref{Eq2}, if there exists a feasible control $u:[0,T]\rightarrow\mathbb{R}^m$ such that $\dot{x}(t)=f(x(t),u(t))$ for almost every $t\in[0,T]$. In this case, we call it the trajectory of system \eqref{Eq2} associated with control $u$. Given two points $x_0,x_1\in M$, we say that $x_1$ is reachable by $x_0$, if there exists a feasible control $u:[0,T]\rightarrow\mathbb{R}^m$ such that the associated trajectory $x:[0,T]\rightarrow M$ satisfies $x(0)=x_0$ and 
$x(T)=x_1$. The set of all points in $M$ that are reachable by $x_0$ is called the reachable set of $x_0$ and is denoted by $R(x_0)$. The system \eqref{Eq2} is said to be globally controllable, if for each $x\in M$, $R(x)=M$. In the rest of the note, the word ``controllability'' always means ``global controllability'' and ``controllable'' always means ``globally controllable'', unless otherwise specified.

In the note,  we shall use $\mathbb{R}_+$ to denote the subset of  $\mathbb{R}$ that contains all the positive real numbers and $\mathbb{R}^l_+$ to denote its $l$-fold Cartesian product $\underbrace{\mathbb{R}_+\times\cdots\times\mathbb{R}_+}_{l}$. We shall use $I_\epsilon^l$ to denote the open subset $\underbrace{(-\epsilon,\epsilon)\times\cdots\times(-\epsilon,\epsilon)}_{l}$ of $\mathbb{R}^l$. We simply use $I^l$ when the specific value of $\epsilon>0$ is of no great importance.

Let $F$ be a family of vector fields on $M$, then $X$ belongs to $F^l$, denoted by $X\in F^l$, if there exist $l$ elements $X_1,\cdots,X_l\in F$ such that $X=(X_1,\cdots,X_l)$. Let $t=(t_1,\cdots,t_l)\in\mathbb{R}^l$, we use $e^{tX}$ to denote $e^{t_1X_1}\circ\cdots\circ e^{t_lX_l}$ and $e^{tX}_\ast$ to denote its tangent map. A mapping $e^{tX}:M\times\mathbb{R}\rightarrow M$ is called physically realizable by system \eqref{Eq2} if $t\ge0$ and 
$$X\in\left\{f(x,u)|u\in\mathcal{U}^m\right\}.$$

\section{Main Results and Examples}\label{Sec3}

To formulate our main result, we extend system \eqref{Eq2} to the following affine system by adding an integrator,
\begin{equation}\label{Eq3}
\begin{split}
\dot{x}&=f(x,y)\\
\dot{y}&=v.
\end{split}
\end{equation}
Here $(x,y)^T\in M\times\mathbb{R}^m$ is regarded as the state and $v\in\mathcal{U}^m$ the control of the extended system \eqref{Eq3}. We shall prove, in the next section, the equivalence between controllability of system \eqref{Eq2} and \eqref{Eq3}, that is,
\begin{Theorem}\label{Thr1}
	Nonlinear system \eqref{Eq2} is globally controllable if and only if affine system \eqref{Eq3} is globally controllable.
\end{Theorem}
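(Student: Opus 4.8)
The plan is to prove the two implications separately, exploiting the structure of the extension. The key observation is that system \eqref{Eq3} contains system \eqref{Eq2} as a ``slice'': if we freeze $y$ at a constant value $u$ and set $v=0$, then the $x$-subsystem of \eqref{Eq3} reduces exactly to $\dot{x}=f(x,u)$, which is a constant-control instance of \eqref{Eq2}. Conversely, the extra integrator $\dot{y}=v$ lets us move $y$ freely and instantaneously (in the sense of reachability) to any value in $\mathbb{R}^m$, since $v\in\mathcal{U}^m$ is unconstrained and $y$ lives in a Euclidean space whose own controllability is trivial.

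\emph{Sufficiency (controllability of \eqref{Eq3} implies that of \eqref{Eq2}).} First I would show how to simulate any trajectory of \eqref{Eq2} by one of \eqref{Eq3}. Given a trajectory of \eqref{Eq2} driven by a piecewise constant control $u(\cdot)$ with switching times $0=\tau_0<\tau_1<\cdots<\tau_N=T$ and constant values $u_0,\dots,u_{N-1}$, I would construct a trajectory of \eqref{Eq3} as follows: on each subinterval $[\tau_k,\tau_{k+1})$ hold $v=0$ so that $y$ stays at the constant $u_k$ and the $x$-component follows $\dot{x}=f(x,u_k)$, exactly reproducing the original $x$-trajectory; at each switching instant use a large $v$ to jump $y$ from $u_k$ to $u_{k+1}$ in arbitrarily short time, which perturbs $x$ negligibly. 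This shows that any point $x_1$ reachable from $x_0$ in \eqref{Eq2} corresponds to $(x_1,\cdot)$ reachable from $(x_0,\cdot)$ in \eqref{Eq3}. Assuming \eqref{Eq3} is controllable then forces the $x$-projection to reach all of $M$, giving controllability of \eqref{Eq2}. Two technical points need care: making the ``$y$-jump'' rigorous in the piecewise-constant framework (one passes to the limit as the jump time tends to $0$, or invokes a continuity/approximation argument for reachable sets), and handling that the target of reachability in \eqref{Eq3} specifies a $y$-coordinate, which is harmless since $y$ is freely steerable.

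\emph{Necessity (controllability of \eqref{Eq2} implies that of \eqref{Eq3}).} Here I would fix two arbitrary states $(x_0,y_0)$ and $(x_1,y_1)$ of \eqref{Eq3} and steer between them. The strategy is to decompose the motion into an $x$-phase and a $y$-phase. Using controllability of \eqref{Eq2}, there is a piecewise constant control $u(\cdot)$ steering $x$ from $x_0$ to $x_1$; I would realize this in \eqref{Eq3} by choosing $v$ so that $y(t)$ tracks the desired $u(t)$ exactly (again moving $y$ between the required constant values by short high-gain bursts of $v$, with $v=0$ in between), thereby driving $x$ from $x_0$ to $x_1$ and leaving $y$ at $u_{N-1}$. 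Finally, with $x$ held fixed (which is not literally possible unless $f(x_1,\cdot)$ vanishes for some control, so more carefully one arranges the last $y$-value and then makes one further short burst), I would apply a final $v$-burst to bring $y$ to the prescribed $y_1$ while $x$ moves only negligibly. The cleanest formulation is that, because $y$ is a free integrator, the reachable set of \eqref{Eq3} from $(x_0,y_0)$ projects onto $R(x_0)=M$ in the $x$-coordinate and is full in the $y$-coordinate, so it is all of $M\times\mathbb{R}^m$.

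The main obstacle, in both directions, is making the informal ``instantaneous jump of $y$'' rigorous within the class of piecewise constant controls and genuine (finite-time) trajectories, since one cannot truly hold $x$ fixed while moving $y$, nor move $y$ in zero time. I expect the honest argument to rely on a continuity-of-reachability or limiting estimate: as the duration of a $y$-burst shrinks, the displacement of $x$ over that burst tends to $0$ (by smoothness and compactness of $f$ on the relevant compact set), so the composed flows $e^{tX}$ of \eqref{Eq3}, built from the physically realizable vector fields in the sense defined in Section \ref{Sec2}, approximate the idealized ``freeze-$x$, move-$y$'' motion arbitrarily well. Formalizing this approximation, and confirming that reachable sets are preserved under the correspondence between trajectories of \eqref{Eq2} and \eqref{Eq3}, is where the real work lies; the algebraic bookkeeping of switching times and control values is routine by comparison.
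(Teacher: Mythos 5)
Your high-level intuition (move $y$ by short high-gain bursts that perturb $x$ negligibly, with the leaf-wise motion handled by controllability of \eqref{Eq2}) matches the spirit of the paper, but both directions of your argument have genuine gaps.

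For sufficiency, your simulation goes the wrong way: embedding trajectories of \eqref{Eq2} into \eqref{Eq3} shows that reachability in \eqref{Eq2} implies reachability in \eqref{Eq3}, which is the inclusion needed for \emph{necessity}, and from it controllability of \eqref{Eq3} gives you nothing about \eqref{Eq2}. The correct step is to project an arbitrary trajectory of \eqref{Eq3} down to $M$: its $x$-component solves $\dot{x}=f(x,y(t))$, i.e.\ it is a trajectory of \eqref{Eq2} driven by the control $u(t)=y(t)$, which is continuous (piecewise linear), \emph{not} piecewise constant. Since the paper's class $\mathcal{U}$ of feasible controls is the piecewise constant functions, you must then invoke the equivalence of controllability across control classes (the paper cites Theorem 3 of Jouan for exactly this). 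Your proposal never addresses this point. Also, your ``cleanest formulation'' of necessity --- that the reachable set projects onto all of $M$ and onto all of $\mathbb{R}^m$, hence equals $M\times\mathbb{R}^m$ --- is a non sequitur: a set with full projections need not be the full product.

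The deeper gap is in necessity. Your limiting argument (shrink the burst duration, the $x$-displacement tends to $0$) only shows that the target $p_1=(x_1,y_1)$ lies in the \emph{closure} of the reachable set of \eqref{Eq3}; global controllability demands exact reachability, and approximating a single trajectory cannot deliver it, since the perturbed endpoint will in general miss $p_1$. The paper closes this gap with Lemma \ref{Lem3}: controllability of \eqref{Eq2} yields a finite composition of flows $\phi(t)=e^{tX}(x_0)$, $t\in U\subset\mathbb{R}_+^l$, whose image already contains a nonempty open set. One then builds a parametrized endpoint map $\varphi:U\times I^m\to M\times\mathbb{R}^m$ whose image is a full neighborhood of $p_1$, and observes that the property ``image contains a neighborhood of $p_1$'' is stable when each idealized $y$-jump $e^{\sigma g_i}$ is replaced by the physically realizable flow $e^{(\sigma/v_i)((f,0)+g_iv_i)}$ with $|v_i|$ large (and each pushed-forward field $e^{\beta h}_\ast(f,0)$ is rewritten as a conjugated composition of realizable flows). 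Without tracking an open \emph{family} of trajectories rather than one, your ``continuity-of-reachability estimate'' cannot be completed into a proof.
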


We shall also prove that if $M$ and $\mathcal{C}$ are compact, then
\begin{Corollary}\label{Coro2}
	Nonlinear system \eqref{Eq2} is globally controllable with input $u$ valued in the bounded set $\in\mathcal{C}$ if and only if system \eqref{Eq3} with $(x,y)^T\in M\times\mathcal{C}$ is globally controllable with bounded input $v$.
\end{Corollary}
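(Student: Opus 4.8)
The plan is to establish the bounded-input analogue of Theorem~\ref{Thr1} by reusing its underlying mechanism---the foliation of $M\times\mathcal{C}$ by the leaves $M\times\{y\}$, each diffeomorphic to $M$---while exploiting compactness of $M\times\mathcal{C}$ to supply the uniform estimates that replace the large-$v$ (unbounded-control) steps used in the unconstrained case. Since $f$ is smooth on the compact set $M\times\mathcal{C}$, it is bounded there, say $\|f(x,y)\|\le K$ for all $(x,y)\in M\times\mathcal{C}$; this bound is what will control the motion of the $x$-component during transitions in $y$, and compactness will also make the relevant reachable sets closed. I would prove the two implications separately.

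For the implication ``\eqref{Eq3} controllable $\Rightarrow$ \eqref{Eq2} controllable'', I would take any trajectory $(x(t),y(t))$ of \eqref{Eq3} joining $(x_0,y_0)$ to $(x_1,y_1)$ and observe that its first component solves $\dot{x}=f(x,y(t))$, i.e.\ it is a trajectory of \eqref{Eq2} under the control $u(t)=y(t)$, which now takes values in the bounded set $\mathcal{C}$. The only gap is that $y(t)$ is piecewise-$C^1$ rather than piecewise constant; I would close it by sampling $y$ on a fine grid to obtain piecewise-constant controls valued in $\mathcal{C}$ and invoking continuous dependence of solutions on the control, which is uniform because $M\times\mathcal{C}$ is compact. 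This produces points of $R(x_0)$ arbitrarily close to $x_1$, so $R(x_0)$ is dense in $M$; since $M$ and $\mathcal{C}$ are compact, the reachable set of the bounded-input system~\eqref{Eq2} is closed (a Filippov-type argument), whence $R(x_0)=M$.

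The reverse implication ``\eqref{Eq2} controllable $\Rightarrow$ \eqref{Eq3} controllable'' is where the real work lies. Given endpoints $(x_0,y_0),(x_1,y_1)\in M\times\mathcal{C}$ and a piecewise-constant control $u$ steering \eqref{Eq2} from $x_0$ to $x_1$, I would build a trajectory of \eqref{Eq3} whose $y$-component tracks $u$. In the unbounded setting one simply lets $v$ be huge, so that each jump of $u$ is realized by an essentially instantaneous motion of $y$ during which $x$ does not move; under the bounded-$v$ constraint each such transition instead consumes a time at least $\Delta/V$, where $\Delta$ is the jump size and $V$ the bound on $v$, during which $x$ drifts by at most $K\Delta/V$. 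The main obstacle is therefore that this drift cannot be made to vanish, so the naive approximation argument of Theorem~\ref{Thr1} fails and must be replaced.

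I would overcome it by absorbing the drift rather than suppressing it. Using the controllability of \eqref{Eq2}, I would realize $u$ in \eqref{Eq3} through finitely many bounded-$v$ transitions and record that the resulting $x$-endpoint differs from $x_1$ by a drift explicitly bounded in terms of $K$, $V$, and the jumps of $u$ (finite in number and of size at most $\mathrm{diam}(\mathcal{C})$). I would then iterate: re-apply the controllability of \eqref{Eq2} to steer from the drifted endpoint back toward $x_1$, realize the new control in \eqref{Eq3}, and repeat, so that the reachable state is driven into every neighborhood of $(x_1,y_1)$. Compactness enters decisively twice here: it makes the reachable set of the bounded-input system~\eqref{Eq3} closed, so that the limit of this correcting sequence is genuinely reachable, and it keeps all intermediate states inside $M\times\mathcal{C}$ so that the drift estimates remain valid throughout. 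The most delicate point, and the one I expect to require the most care, is to show that these successive corrections can be carried out with bounded $v$ without leaving $M\times\mathcal{C}$ and that they render $R_{\eqref{Eq3}}(x_0,y_0)$ dense in $M\times\mathcal{C}$; this is exactly where the hypotheses that both $M$ and $\mathcal{C}$ are compact are indispensable.
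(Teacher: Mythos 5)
Your reverse direction attacks a harder statement than the paper proves, and the argument you sketch for it has a real gap. The paper does not fix a bound $V$ on $v$ in advance and then fight the resulting drift; it reads ``globally controllable with bounded input'' as ``with input valued in \emph{some} bounded set'', and harvests that set from the proof of Theorem~\ref{Thr1}: for each target $p\in M\times\mathcal{C}$, that proof already yields an open neighborhood $W_p$ of $p$ reachable from $p_0$ using controls in some bounded set $\mathcal{B}_p$ (the $|v_i|$ need only be \emph{large enough}, hence finite). Compactness of $M\times\mathcal{C}$ then gives a finite subcover $W_{p_1},\dots,W_{p_k}$, so the single bounded set $\bigcup_{i=1}^k\mathcal{B}_{p_i}$ suffices to reach everything from $p_0$; the same argument applied to the time-reversed systems shows $p_0$ is reachable from everything with bounded input, and concatenation through $p_0$ finishes the proof. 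Your proposal omits this time-reversal step entirely, and it is needed: showing that one point reaches everything is not yet global controllability.

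The concrete gap in your drift-absorption scheme is that the iteration has no reason to converge. Each correcting control you generate from the controllability of \eqref{Eq2} has jumps of size up to $\mathrm{diam}(\mathcal{C})$ and an uncontrolled number of switchings, so realizing it in \eqref{Eq3} with $|v|\le V$ reintroduces a drift of the \emph{same order} as the error it was meant to cancel; nothing in your construction makes the error contract from one round to the next, so density of $R_{\eqref{Eq3}}(x_0,y_0)$ is not established. (Indeed, for a prescribed small $V$ the statement you are trying to prove may simply fail, which is a sign the quantifier on the bound should come last, as in the paper.) A secondary issue: in the forward direction you pass from a dense reachable set to a full one by asserting closedness of reachable sets via ``a Filippov-type argument'', but that requires convexity of the velocity sets $\{f(x,u):u\in\mathcal{C}\}$, which is not assumed; the paper instead disposes of this direction by projecting trajectories onto $M$ and citing the equivalence of controllability under piecewise-continuous and piecewise-constant controls.
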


Here, we give examples to illustrate some applications of our main result.
\begin{Example}
	Consider the following linear system 
	\begin{equation}\label{Eq4}
	\dot{x}=Ax+Bu,\\
	\end{equation}
	where $x=(x_1,x_2,x_3)^T,A=(a_{ij}),1\le i,j\le3$ and $B=(0,0,1)^T$.
	
	According to Theorem \ref{Thr1}, system \eqref{Eq4} is controllable if and only if the following system 
	\begin{equation}\label{Eq5}
	\begin{pmatrix}
	\dot{x}_1\\
	\dot{x}_2\\
	\end{pmatrix}=\hat{A}\begin{pmatrix}
	x_1\\
	x_2\\
	\end{pmatrix}+\begin{pmatrix}
	a_{13}\\
	a_{23}\\
	\end{pmatrix}v,
	\end{equation}
	where $\hat{A}=(a_{ij}),1\le i,j\le2$, is controllable. For the above system to be controllable, $a_{13}^2+a_{23}^2$ has to be positive. Therefore, we 
	can apply the change of coordinates
	$$\begin{pmatrix}
	y_1\\
	y_2\\
	\end{pmatrix}=P\begin{pmatrix}
	x_1\\
	x_2\\
	\end{pmatrix}$$
	to \eqref{Eq5} and obtain 
	$$\begin{pmatrix}
	\dot{y}_1\\
	\dot{y}_2
	\end{pmatrix}=\bar{A}\begin{pmatrix}
	y_1\\
	y_2\\
	\end{pmatrix}+\begin{pmatrix}
	0\\
	a_{13}^2+a_{23}^2\\
	\end{pmatrix}v,$$
	where $\bar{A}=P\hat{A}P^{-1}$ and 
	$$P=\begin{pmatrix}
	a_{23}&-a_{13}\\
	a_{13}&a_{23}\\
	\end{pmatrix}.$$
	Using Theorem \ref{Thr1} again, we conclude that system \eqref{Eq4} is controllable if and only if 
	\begin{equation}\label{Eq6}
	\bar{A}_{12}\ne0.
	\end{equation}
	It can be seen that criterion \eqref{Eq6} is equivalent to the controllability matrix criterion $\mathrm{dim}(B,AB,A^2B)=3$.
\end{Example}

\begin{Example}
	Consider the following affine system 
	\begin{equation}\label{Eq7}
	\begin{split}
	&\dot{x}_1=\sin{x_3},\\
	&\dot{x}_2=\cos{x_3},\\
	&\dot{x}_3=x_4,\\
	&\vdots\\
	&\dot{x}_{n-1}=x_n,\\
	&\dot{x}_{n}=u.
	\end{split}
	\end{equation}
	It is clear that system \eqref{Eq7} is controllable (see Theorem 6.2 in \cite{Cheng2010Analysis}). Using Theorem \ref{Thr1}, this claim can be verified easily. Indeed, repeatedly use Theorem \ref{Thr1} $n-3$ times we conclude that the controllability of system \eqref{Eq7} is equivalent to that of 
	\[\begin{split}
	&\dot{x}_1=\sin{v},\\
	&\dot{x}_2=\cos{v},\\
	\end{split}\]
	which is obviously controllable. Hence, system \eqref{Eq7} itself is controllable.
\end{Example}

The previous two examples show the necessary of Theorem \ref{Thr1}, the following example which is given in \cite{Emel'yanov1992Application} will show the sufficiency of Theorem \ref{Thr1}.

\begin{Example}
	Consider the nonlinear system 
	\begin{equation}\label{Eq8}
	\begin{split}
	\dot{x}_1&=u,\\
	\dot{x}_2&=x_3^3,\\
	\dot{x}_3&=u^3,
	\end{split}
	\end{equation}
	which is not affine. We use Theorem \ref{Thr1} to verify the controllability of system \eqref{Eq8}. Extending it to the affine system
		\[\begin{split}
		\dot{x}_1&=x_4,\\
		\dot{x}_2&=x_3^3,\\
		\dot{x}_3&=x_4^3,\\
		\dot{x}_4&=v,
		\end{split}\]
		of which the controllability can be verified by Theorem 6.2 in \cite{Cheng2010Analysis},  then the controllability of system \eqref{Eq8} can be established by applying Theorem \ref{Thr1}.
\end{Example}

\section{The Proofs of Main results}\label{Sec4}

To prove Theorem \ref{Thr1}, we study the controllability of system \eqref{Eq3}. Note that the state space $M\times \mathbb{R}^m$ of it admits a nature
foliation $\mathcal{F}=\left\{M\times\left\{y\right\}|y\in\mathbb{R}^m\right\}$, and the vector field $(f(\cdot,\cdot),0)$ is tangent to the leaf $L_y\coloneqq 
M\times\left\{y\right\}$ for any $y\in\mathbb{R}^m$. Therefore, we can restrict the vector field $(f(\cdot,\cdot),0)$ to the submanifold $L_y$ of $M\times\mathbb{R}^m$ to obtain a vector field $(f(\cdot,y),0)$ on $L_y$. Moreover, let $g_i,i=1,\cdots,m$ be the vector field associated with the $i$-th input $v_i$, that is $g_i=\partial_{y_i}$, and $G=\left\{g_1,\cdots,g_m\right\}$, then clearly the mapping $\varphi:L_{y}\rightarrow L_{y_0}$ defined by 
$$\varphi(p)=e^{tg}(p),\quad\forall p\in L_y$$
where $g=(g_1,\cdots,g_m)\in G^m$ and $t=y_0-y\in\mathbb{R}^m$, is a diffeomorphism. Hence, we can push-forward the vector fields $(f(\cdot,y),0)$ on all leaves $L_y$ of $\mathcal{F}$ to a single leaf $L_{y_0}$, resulting in a family of vector fields
$$\bar{F}\coloneqq\left\{\varphi_\ast(f(\cdot,y),0)\mid y\in\mathbb{R}^m\right\}
=\left\{(f(\cdot,y),0)|y\in\mathbb{R}^m\right\}$$
on $L_{y_0}$ for each $y_0\in\mathbb{R}^m$.

Note that the family of vector fields $\bar{F}$ on $L_{y_0}$ can be identified with the family of vector fields $F\coloneqq\left\{f(\cdot,u)|u=\text{const}\right\}$ on $M$. Indeed, let $\pi$ be the projection mapping from $L_{y_0}$ to $M$, i.e.,
$$\pi:(x,y_0)\mapsto x.$$
It is clear that $\pi$  is a diffeomorphism. Moreover, the tangent mapping 
$$\pi_\ast:(f(x,y),0)\mapsto f(x,y),$$
induced by $\pi$ defines a one-to-one correspondence between $\bar{F}$ and $F$.

In addition, we need the following lemma which follows directly from the proof of Lemma 1 in \cite{Jouan2009Finite}.
\begin{Lemma}\label{Lem3}
	If system \eqref{Eq2} is controllable, then for any point $x\in M$, there exist a integer $l>0$, a subset $U$ of $\mathbb{R}_+^l$ and $X\in F^l$, such that the mapping $\phi:U\rightarrow M$ defined by 
	$$\phi(t)\coloneqq e^{tX}(x)$$
	satisfies that the image $\phi(U)$ of $U$ under $\phi$ contains a non-empty open subset of $M$.
\end{Lemma}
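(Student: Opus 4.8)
The plan is to exploit the controllability hypothesis to produce, for a fixed starting point $x\in M$, a large reachable set and then extract from it an open piece parametrized smoothly by the flow times. Since system \eqref{Eq2} is globally controllable, the reachable set $R(x)$ equals all of $M$; in particular it contains an open neighborhood of some point, so points arbitrarily close to any target can be reached. The key is to translate this set-theoretic reachability into a statement about the rank of a composition of flows. The natural object is the \emph{attainable mapping} sending a tuple of nonnegative times $t=(t_1,\dots,t_l)$ to the endpoint $e^{tX}(x)$ of the concatenated trajectory, where each $X_i\in F$ is a constant-control vector field $f(\cdot,u_i)$. Because controls are piecewise constant, every point of $R(x)$ is attained by such a finite concatenation, so $R(x)=M$ forces these mappings (ranging over all $l$ and all $X\in F^l$) to cover $M$.

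First I would fix $x$ and recall that since $\mathcal{U}$ is the class of piecewise-constant controls, each trajectory starting at $x$ is a finite concatenation of integral curves of vector fields in $F$, hence of the form $t\mapsto e^{tX}(x)$ for some $l$, some $X\in F^l$, and some $t\in\mathbb{R}_+^l$. Thus $M=R(x)=\bigcup_{l,X}\phi_{l,X}(\mathbb{R}_+^l)$, a countable union of images of these smooth mappings. The second step is to apply a Baire-category or Sard-type argument: $M$ is a second countable (hence Baire) manifold, and it is covered by countably many sets $\phi_{l,X}(\mathbb{R}_+^l)$; therefore at least one of these images cannot be nowhere dense, so its closure has nonempty interior. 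This is essentially the content lifted from the proof of Lemma~1 in \cite{Jouan2009Finite}, and it delivers a single pair $(l,X)$ whose attainable image is ``fat.''

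The decisive step is to upgrade ``not nowhere dense'' to ``contains a nonempty open set,'' which is where the smoothness of $\phi=\phi_{l,X}$ enters. Here I would invoke the constant-rank or Sard machinery: let $r$ be the maximal rank attained by $d\phi$ on $U:=\mathbb{R}_+^l$, and choose a point $t^\ast$ where this rank is achieved. Near $t^\ast$ the rank is locally constant (rank is lower semicontinuous, so it stays $\ge r$, and $r$ is maximal), hence by the constant rank theorem $\phi$ looks locally like a submersion onto an $r$-dimensional submanifold. If $r<n=\dim M$ then the image of $\phi$ would be locally contained in a countable union of $r$-dimensional submanifolds, which are nowhere dense in $M$, contradicting that $\phi(U)$ is not nowhere dense. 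Therefore $r=n$, so $d\phi_{t^\ast}$ is surjective, $\phi$ is a submersion at $t^\ast$, and by the open mapping property of submersions $\phi$ carries a neighborhood of $t^\ast$ in $U$ onto a nonempty open subset of $M$. Taking this $(l,U,X)$ proves the lemma.

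The main obstacle I anticipate is the passage from Baire category to genuine openness, i.e.\ ruling out the possibility that the large image is concentrated on a lower-dimensional set. One must be careful that a smooth image which is not nowhere dense really does force maximal rank: the clean way is to observe that a smooth map of manifolds whose rank is everywhere at most $r<n$ has image of measure zero (by Sard, or directly by the constant rank theorem applied on the finitely many constant-rank strata), and a measure-zero subset of $M$ is nowhere dense, contradicting the category conclusion. Handling the boundary of the orthant $\mathbb{R}_+^l$ and the fact that the piecewise-constant structure only gives nonnegative times requires a little care, but since we only need \emph{some} interior point of maximal rank this does not affect the argument; indeed one may work with the open orthant and the open times without loss.
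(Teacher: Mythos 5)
The paper does not prove this lemma itself; it cites the proof of Lemma~1 in \cite{Jouan2009Finite}, which runs by a maximal-rank (Krener-type) induction. Your Baire/Sard route has a genuine gap at its first decisive step: the cover $M=R(x)=\bigcup_{l,X}\phi_{l,X}(\mathbb{R}_+^l)$ is \emph{not} a countable union. The family $F=\{f(\cdot,u)\mid u=\text{const}\}$ is parametrized by $u\in\mathbb{R}^m$, so for each $l$ the index set $F^l$ is uncountable, and the Baire category theorem does not apply. The natural repair --- absorb the control values into the domain and consider the joint maps $\Phi_l(t,u_1,\dots,u_l)=e^{t_1f(\cdot,u_1)}\circ\cdots\circ e^{t_lf(\cdot,u_l)}(x)$, whose domains for $l=1,2,\dots$ do give a countable, $\sigma$-compact cover of $M$ --- produces via Sard a point where $d\Phi_l$ has rank $n$ \emph{jointly in $(t,u)$}. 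That is not the statement of the lemma: you need the differential in $t$ alone, at a \emph{fixed} $X\in F^l$, to be surjective, and the $u$-partials need not lie in the span of the $t$-partials. Converting derivatives in the control directions into time-derivatives of longer concatenations is precisely the nontrivial content that the standard proof supplies and that your argument skips. A secondary slip: ``measure zero'' does not imply ``nowhere dense'' (a countable dense set has measure zero), so the passage from the category conclusion to a contradiction must be run on the individual compact pieces $\Phi_l(C)$ (compact with empty interior $\Rightarrow$ nowhere dense), not on the closure of the whole image.

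The argument that actually works, and that \cite{Jouan2009Finite} uses, avoids category altogether: among all maps $\phi_{l,X}$ restricted to open subsets of $\mathbb{R}_+^l$, choose one whose differential attains the maximal possible rank $r\le n$ at some point; near such a point the image is an $r$-dimensional submanifold $N\subseteq R(x)$. If some $Y\in F$ were not tangent to $N$ at some point, appending the flow of $Y$ would produce a map of rank $r+1$, contradicting maximality; hence every element of $F$ is tangent to $N$, so the orbit of the family $F$ through points of $N$ (and therefore through $x$) has dimension $r$. Since $R(x)=M$ forces the orbit to be all of $M$, we get $r=n$, and the maximal-rank map is a submersion near that point, giving the open image. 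If you want to keep a Baire flavor, you would still have to embed this rank-increasing step to pass from the joint $(t,u)$-rank to the $t$-rank, at which point the induction is doing all the work anyway.
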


\begin{figure}
	\begin{center}
		\includegraphics[height=4cm]{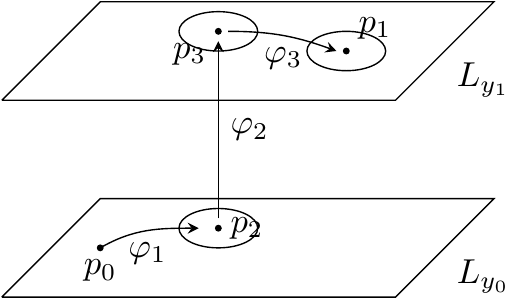}    
		\caption{}  
		\label{Fig1}                                 
	\end{center}                                 
\end{figure}

\begin{proof}(Proof of Theorem \ref{Thr1})
	The sufficiency of Theorem \ref{Thr1} is obvious. Indeed, if system \eqref{Eq3} is controllable, then system \eqref{Eq2} is controllable by piecewise continuous control and consequently by piecewise constant control in view of Theorem 3 in \cite{Jouan2009Finite}.
	
	The necessity is proved in two steps. 
	
	\textbf{Step 1.} We prove that for arbitrary given two points $p_0=(x_0,y_0)$ and $p_1=(x_1,y_1)\in M\times\mathbb{R}^m$, there exists a mapping $\varphi:U\times I^m\rightarrow M\times\mathbb{R}^m$, where $U$ is a subset of $\mathbb{R}_+^l$, such that the image $\varphi(U\times I^m)$ is a neighborhood of $p_1$.
	
	Indeed, since system \eqref{Eq2} is controllable, there exist $X\in F^l$ and a subset $U$ of $\mathbb{R}_+^l$, thanks to Lemma \ref{Lem3}, such that the image of $U$ under the mapping $\phi:U\rightarrow M$ defined by 
		$$\phi(t)\coloneqq e^{tX}(x_0),\quad \forall t\in U$$
		contains a non-empty open subset of $M$. Let $x_2$ be a point in this open subset, then $\phi(U)$ is a neighborhood of $x_2$. Now, we lift $\phi$ to the mapping $\varphi_1:U\rightarrow L_{y_0}$ by
	$$\varphi_1(t)\coloneqq\pi^{-1}\circ\phi(t),\quad \forall t\in U,$$
	where $\pi^{-1}$ is the inverse of the projection mapping $\pi$ defined previously. It can be verified that 
	$$\varphi_1(t)=e^{t\bar{X}}(p_0),$$ 
	where $\bar{X}=(\pi_\ast^{-1}X_1,\cdots,\pi_\ast^{-1}X_l)\in\bar{F}^l$. Clearly, $\varphi_1(U)$ is a neighborhood (with respect to the subspace topology of $L_{y_0}$) of $p_2\coloneqq \pi^{-1}(x_2)=(x_2,y_0)$ (see Fig. 1).
	
	Let $\eta=y_1-y_0\in\mathbb{R}^m$ then the mapping $\varphi_2$ defined by 
		$$\varphi_2(p)\coloneqq e^{\eta g}(p),\quad\forall p\in L_{y_0}.$$
		is a diffeomorphism between $L_{y_0}$ and $L_{y_1}$. 
	
	Next, let $p_3\coloneqq\varphi_2(p_2)$, then $p_3\in L_{y_1}$ and hence can be written as $p_3=(x_3,y_1)$ where $x_3\in M$ and $y_1\in\mathbb{R}^m$. By the definition of controllability of system \eqref{Eq2}, there exists a feasible control $u:[0,T]\rightarrow\mathbb{R}^m$ such that the associated trajectory $x(t)$ satisfies $x(0)=x_3$ and $x(T)=x_1$. Since $u$ is piecewise constant, there exists a partition $0=T_0<T_1<\cdots<T_k=T$ of $[0,T]$ such that $u$ is constant on each interval $[T_{i-1},T_i),i=1,\cdots,k$. Let $u(i)=u(T_{i-1}), Y_i=f(x,u(i))\in F$ and $\tau_i=T_i-T_{i-1}$ for $i=1,\cdots,k$, then we have 
	$$x_1=e^{\tau Y}(x_3),$$
	where $Y=(Y_k,\cdots,Y_1)\in F^k$ and $\tau=(\tau_k,\cdots,\tau_1)\in\mathbb{R}_+^k$. Analogously, we can lift $e^{\tau Y}:M\rightarrow M$ to the mapping 
	$\varphi_3:L_{y_1}\rightarrow L_{y_1}$ by
	$$\varphi_3(p)\coloneqq e^{\tau\bar{Y}}(p),\quad\forall p\in L_{y_1},$$
	where $\bar{Y}=(\tilde{\pi}_\ast^{-1}Y_k,\cdots,\tilde{\pi}_\ast^{-1}Y_1)\in\bar{F}^k$and $\tilde{\pi}$ is the projection mapping from $L_{y_1}$ to $M$. 
	Clearly, the mapping $\varphi_3$ is a diffeomorphism and satisfies $\varphi_3(p_3)=p_1$.
	
	Since $\varphi_2$ and $\varphi_3$ are diffeomorphisms, the image $\varphi_3\circ\varphi_2\circ\varphi_1(U)$ of $U$ is a neighborhood of $p_1$ with respect to the subspace topology of $L_{y_1}$. Hence, if we define the mapping $\varphi:U\times I^m\rightarrow M\times \mathbb{R}^m$ by
	$$\varphi(t,s)\coloneqq e^{sg}\circ\varphi_3\circ\varphi_2\circ\varphi_1(t), \quad\forall t\in U, s\in I^m,$$
	then the image $\varphi(U\times I^m)$ of $U\times I^m$ under $\varphi$ is a neighborhood of $p_1$ with respect to the topology of $M\times\mathbb{R}^m$. 	 
	
	\textbf{Step 2.} We prove in this step that the mapping $\varphi$ can be modified in a manner so that it is realizable by system \eqref{Eq3} and the image $\varphi(U\times I^m)$ remains a neighborhood of $p_1$, thereby proving that $p_1$ is reachable by $p_0$ and consequently the controllability of system \eqref{Eq3} in view of the arbitrariness of $p_0$ and $p_1$. To this end, note that the vector fields $V$ appear in $\varphi$ falling into the following two categories.
	
	Case 1. $V\in G$, that is $V=g_i$ for some $i\in\left\{1,\cdots,m\right\}$. In this case, we have 
	$$e^{\sigma V}=e^{\sigma/{v_i}(((f,0)+g_iv_i)-(f,0))}.$$
	Choosing $v_i$ so that $\sigma/v_i\ge0$ and defining  
	$$\rho(\sigma)\coloneqq e^{\sigma/{v_i}((f,0)+g_iv_i)},$$
	then $\rho(\sigma)$ is physically realizable by system \eqref{Eq3}. Replacing $e^{\sigma V}$ in $\varphi$ by $\rho(\sigma)$ as long as $V\in G$, we will obtain another mapping $\varphi^\prime: U\times I^m\rightarrow M\times \mathbb{R}^m$. In general, the image $\varphi^\prime(U\times I^m)$ of $U\times I^m$ under $\varphi^\prime$ will be different from that under $\varphi$. But as long as $|v_i|$ is large enough, the image $\varphi^\prime(U\times I^m)$ will remain a neighborhood of $p_1$. 
	
	Case 2. $V\in \bar{F}$, that is $V=e^{\beta h}_\ast(f,0)$, where $\beta=(\beta_1,\cdots,\beta_j)\in\mathbb{R}^j$ and $h=(h_1,\cdots,h_j)\in G^j$. In this case, we have (see Lemma 2.20 in \cite{Agrachev2012Introduction})
	$$e^{\sigma V}=e^{\beta h}\circ e^{\sigma(f,0)}\circ e^{-\hat{\beta}\hat{h}},$$
	where $\hat{\beta}=(\beta_j,\cdots,\beta_1)$ and $\hat{h}=(h_j,\cdots,h_1)$. Note that $e^{\sigma(f,0)}$ is physically realizable since $\sigma$ is positive, thus no modification is required. The proof is complete once we notice that the vector fields $\pm h_i,i=1,\cdots,j$ falling into Case 1.
\end{proof} 

\begin{proof}[Proof of Corollary \ref{Coro2}]
	The ``if'' part is obvious. To prove the ``only if'' part, we suppose that system \eqref{Eq2} is globally controllable with input $u$ valued in the bounded set $\mathcal{C}$ and let $p_0\in M\times\mathcal{C}$ be fixed. By the proof of Theorem \ref{Thr1}, for any $p\in M\times\mathcal{C}$, $p_0$ can reach all the points in an open neighborhood $W_p$ of $p$ with input $v$ valued in a bounded set $\mathcal{B}_p$. Since $\{W_p\}_{p\in M\times\mathcal{C}}$ is an open cover of the compact set $M\times\mathcal{C}$, there exist a finte number of points $p_1,\cdots,p_k$ such that $\bigcup_{i=1}^kW_{p_i}$ covers $M\times\mathcal{C}$ and consequently, $p_0$ can reach any point in $M\times\mathcal{C}$ with input $v$ valued in the bounded set $\bigcup_{i=1}^k\mathcal{B}_{p_i}$. Applying the same argument to the time-reversed systems of \eqref{Eq2} and \eqref{Eq3}, one can see that $p_0$ can also be reached by any point in $M\times\mathcal{C}$ with bounded input. Put these two facts together, we conclude that system \eqref{Eq3} is globally controllable with bounded input $v$.
\end{proof}

\begin{Remark}
	Theorem \ref{Thr1} was proved in the situation where $\mathcal{U}$ is the set of piecewise constant functions, but it can be extended to other situations where $\mathcal{U}$ is more general. Since controllability using arbitrary controls is equivalent to controllability using just piecewise constant controls (see \cite{Jouan2009Finite,Sussmann1979Subanalytic,Sontag1988Finite}for details).  
\end{Remark}

\begin{Remark}
	Compared with the approach of \cite{Cheng2010Analysis} in their proofs of Theorem 6.1 and 6.2, the proposed one is novel in the following way. In the approach of \cite{Cheng2010Analysis}, the state space is foliated by submanifolds which are tangent to $G$ 	(i.e. maximal integral submanifolds of $G$), therefore the ``controllability'' within the leaves is guaranteed while that traverses the leaves is not. Hence, the main task is to prove the flows can traverse the leaves in all directions. But in our approach, the state space is foliated by submanifolds perpendicular to $G$, therefore the motions traverse the leaves can be in all directions and the problem is to prove the ``controllability'' within the leaves.
\end{Remark}

\section{Conclusion}\label{Sec5}
In this note, we have proved that the global controllability of a general nonlinear system is preserved when an integrator is added. Also, we have extended this result to the case with bounded inputs. By making use of our result, the controllability problem of a general nonlinear system can be converted to that of an affine system which has been studied more thoroughly. The power of our result has been demonstrated by several examples. In addition, the differences between our approach and existing ones have been illustrated.

\section*{References}

\bibliography{mybibfile}

\end{document}